\newtheorem{theoremalph}{Theorem}
\newtheorem{Theorem}{Theorem}[section]
\newtheorem*{Theorem A}{Theorem A}
\newtheorem{Definition}[Theorem]{Definition}
\newtheorem{Proposition}[Theorem]{Proposition}
\newtheorem{Lemma}[Theorem]{Lemma}
\newtheorem*{Claim}{Claim}
\newtheorem*{Acknowledgements}{Acknowledgements}
 \def\NN{{\mathbb N}}
\def\dim{\operatorname{dim}}
\begin{document}

\title{On the finiteness of uniform sinks \footnote{2000 Mathematics Subject Classification. 37D30}
}

\author{  Dawei Yang \footnote{D. Yang was partially supported by NSFC 11271152, Ministry of Education of P. R. China 20100061120098}\and Yong Zhang
\footnote{Y. Zhang war partially supported by NSFC 11271152}}

\date{}

\maketitle

%\tableofcontents

\
\begin{abstract}
We study the finiteness of uniform sinks  for flow. Precisely, we prove that, for $\alpha>0$  $T>0$,
if a vector field $X$ has only hyperbolic singularities or sectionally dissipative singularities, then $X$ can have only finitely many $(\alpha,T)$-uniform sinks.
This is a generalized version of a theorem of Liao \cite{Lia89}.
\end{abstract}

\section{Introduction}

In this work, we give a generalized version of a theorem of Liao \cite{Lia89}. It could be seen as an extension of
the remarkable Pliss' theorem \cite{Pli72} in the setting of singular flows.

Let $M$ be a compact smooth Riemannian manifold and $X$ be a smooth vector field on $M$. We know that $X$ will generate a smooth flow $\phi_t$. If $X(\sigma)=0$, $\sigma$ is called a \emph{singularity} of $X$. If $\phi_t(p)=p$ for some $t>0$ and $X(p)\neq 0$, $p$ is called a \emph{periodic point}. We use ${\rm Sing}(X)$ and ${\rm Per}(X)$ to denote the sets of singularities and periodic points.

The flow $\Phi_t=d \phi_t:TM\to TM$ is called the \emph{tangent flow}. Note that every periodic orbit has at least one zero Lyapunov exponent w.r.t. $\Phi_t$.
To understand the dynamics in a small neighborhood of a periodic orbit, Poincar\'e used the Poincar\'e return map: for any point in the periodic orbit, one takes a cross section at that point, then the flow defines a local diffeomorphism in a small neighborhood of the cross section. The dynamics of the flow in a small neighborhood of the periodic orbit can be understood by the dynamics of the diffeomorphism.

By extending this idea to the general non-periodic case, for any regular point $x$ and any $t\in\mathbb R$, one considers local normal cross sections at $x$ and $\phi_t(x)$, then the flow gives a local diffeomorphism between these two cross sections. Its linearization is the \emph{linear Poincar\'e flow} $\psi_t$, which is defined as the following:
given a regular point $x\in M$, consider a vector $v$  in the orthogonal complement of $X(x)$,
one defines
$$\psi_t(v)=\Phi_t(v)- {<\Phi_t(v),X(\phi_t(x))>\over |X(\phi_t(x))|^2} X(\phi_t(x)).$$
Note that $\psi_t$ cannot be defined on the singularities.

Given $\alpha>0$ and $T>0$, a periodic orbit $\Gamma$ is called an \emph{($\alpha,T$)-uniform sink} if there are $m\in\NN$ and times $0=t_0<t_1<t_2\cdots<t_n=m\pi(\Gamma)$ ($\pi(\Gamma)$ is the period of $\Gamma$) satisfying $t_i-t_{i-1}\le T$ for any $1\le i\le n$ such that for any $x\in\Gamma$, one has
$$\prod_{i=1}^n\|\psi_{t_{i}-t_{i-1}}(\phi_{t_{i-1}}(x))\|\le {\rm e}^{-\alpha m\pi(\Gamma)}.$$

A singularity $\sigma$ is called \emph{sectionally dissipative} if the following is true: when we list all the eigenvalues of $DX(\sigma)$ as $\{\lambda_1,\lambda_2,\cdots,\lambda_d\}$, we have ${\rm Re}(\lambda_i)+{\rm Re}(\lambda_j)\le0$ for any $1\le i<j\le d$. Here $d$ is the dimension
of the manifold $M$.

Notice that for a sectionally dissipative non-hyperbolic singularity, the maximal of the real parts of its eigenvalues should be zero.

\begin{theoremalph}\label{Thm:main}
Let $\alpha>0$  $T>0$. If a vector field $X$ has only hyperbolic singularities or sectionally dissipative singularities, then $X$ can have only finitely many $(\alpha,T)$-uniform sinks.
\end{theoremalph}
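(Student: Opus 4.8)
The plan is to argue by contradiction: from an infinite family of $(\alpha,T)$-uniform sinks I extract a limit compact invariant set, and then I derive a contradiction, using the hypothesis on the singularities exactly when this limit set meets ${\rm Sing}(X)$. The first observation is that an $(\alpha,T)$-uniform sink is a genuine sink. Indeed, if $\Gamma$ is such a sink with partition $0=t_0<t_1<\dots<t_n=m\pi(\Gamma)$, then since $\psi$ is a cocycle and the operator norm is submultiplicative,
$$\|\psi_{m\pi(\Gamma)}(x)\|\le\prod_{i=1}^{n}\|\psi_{t_i-t_{i-1}}(\phi_{t_{i-1}}(x))\|\le {\rm e}^{-\alpha m\pi(\Gamma)}<1$$
for every $x\in\Gamma$; since $\psi_{m\pi(\Gamma)}(x)$ is conjugate to the derivative of the $m$-th Poincar\'e return map at $x$, that return map is a contraction, so $\Gamma$ is a hyperbolic attracting periodic orbit. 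Consequently distinct $(\alpha,T)$-uniform sinks have disjoint, nonempty, open basins of attraction.

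Suppose now that $X$ has infinitely many $(\alpha,T)$-uniform sinks $\{\Gamma_k\}_{k\ge1}$, and set $\Lambda=\bigcap_{N\ge1}\overline{\bigcup_{k\ge N}\Gamma_k}$. This is a nonempty compact invariant set, and $\Gamma_k$ lies inside any prescribed neighborhood of $\Lambda$ once $k$ is large (otherwise a point of $\Gamma_k$ outside that neighborhood would subconverge to a point of $\Lambda$ outside it). Consider first the case $\Lambda\cap{\rm Sing}(X)=\emptyset$. Choose $c>0$ and a neighborhood $U\supset\Lambda$ with $\|X\|\ge c$ on $U$; then $\Gamma_k\subset U$ for all large $k$. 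On such a region the linear Poincar\'e flow, its inverse, and the Poincar\'e return maps over times $\le T$ have uniformly bounded, uniformly nondegenerate $C^2$-size. Combined with the uniform contraction above, a Pliss-lemma selection of hyperbolic times followed by a graph transform endows each $\Gamma_k$ with a stable tube of a uniform transverse radius $\rho>0$; and since $\|X\|\ge c$ forces the period of $\Gamma_k$ to be bounded below (a very short periodic orbit would lie inside a flow box), these tubes have volume bounded below by a positive constant independent of $k$. Being pairwise disjoint, this contradicts ${\rm vol}(M)<\infty$. This part is Liao's theorem \cite{Lia89}, or Pliss' theorem \cite{Pli72} applied to the time-$T$ return maps.

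It remains to handle the case that some singularity $\sigma\in\Lambda$, with $\sigma$ hyperbolic or sectionally dissipative. A small neighborhood of a hyperbolic sink (resp. source) is forward (resp. backward) trapping and contains no periodic orbit, so by invariance of periodic orbits no $\Gamma_k$ could enter it, contradicting $\sigma\in\Lambda$; hence $\sigma$ is neither a sink nor a source (a sectionally dissipative $\sigma$ is never a source anyway), so $DX(\sigma)$ has an eigenvalue of nonnegative real part --- positive in the hyperbolic case. Passing to a subsequence, assume ${\rm dist}(\Gamma_k,\sigma)\to0$. Because $\|X\|=O({\rm dist}(\cdot,\sigma))$ near $\sigma$, each $\Gamma_k$ needs a long time $\tau_k\to\infty$ to cross a fixed small neighborhood $U_\sigma$, and I would split the partition of $\Gamma_k$ into the steps whose orbit segment meets $U_\sigma$ (the ``passage'') and the remaining steps. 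The obstruction to copying the previous argument is real: the derivatives of the Poincar\'e return maps blow up like $\|X\|^{-2}$ near $\sigma$, so the stable tube radius of $\Gamma_k$ need not remain bounded below.

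The heart of the proof --- and the step I expect to be the main obstacle --- is thus to use the eigenvalue hypothesis on $\sigma$ to control the linear Poincar\'e cocycle of $\Gamma_k$ (and the nonlinear distortion) along a passage through $U_\sigma$ quantitatively enough that the constants $(\alpha,T)$ are not degraded. For a sectionally dissipative $\sigma$ I would exploit that $DX(\sigma)$ contracts area on every $2$-plane: in an adapted metric, $\|\wedge^{2}{\rm e}^{tDX(\sigma)}\|\le{\rm e}^{\beta t}$ for $t\ge0$ with $\beta$ as small as wanted, which bounds the $2$-dimensional distortion of $\psi$ along passages and should allow one to ``absorb'' the passages and fall back on the nonsingular mechanism of the previous paragraph applied to a desingularized version of $\Gamma_k$. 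For a hyperbolic $\sigma$ I would instead use the splitting $T_\sigma M=E^s\oplus E^u$: a passage shadows a pseudo-orbit entering $\sigma$ along $W^s(\sigma)$ and leaving along $W^u(\sigma)$, and a Liao-type selecting/shadowing lemma lets one replace it by a genuine orbit segment that stays $c$-away from ${\rm Sing}(X)$ with comparable linear Poincar\'e behaviour, once again reducing to the nonsingular case. Making either estimate precise, and especially treating the two kinds of singularity in one framework without losing control of $\alpha$ and $T$, is the delicate point.
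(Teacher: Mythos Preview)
Your treatment of the nonsingular limit is essentially the paper's (Pliss selection plus a uniform stable-manifold/volume argument). The singular case, however, is where your proposal has a genuine gap, and the approach you sketch is not the one that works.

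The paper does \emph{not} try to ``desingularize'' the passage of $\Gamma_k$ near $\sigma$ and reduce to the nonsingular case. Instead it works with the \emph{rescaled} linear Poincar\'e flow $\psi_t^*(v)=\frac{|X(x)|}{|X(\phi_t(x))|}\psi_t(v)$, whose key virtue is that the associated sectional Poincar\'e map has uniform $C^1$ size even near singularities (Lemma~\ref{Lem:uniformestimate}). From the $(\alpha,T)$-uniform contraction, Pliss gives a point $x_n\in\gamma_n$ that is $(1,\eta,T)$-$\psi^*$-contracted; the accumulation of the \emph{directions} $X(x_n)/|X(x_n)|$ at $\sigma$ then forces a dominated splitting $T_\sigma M=E\oplus F$ with $\dim F=1$ (Proposition~\ref{Pro:splittingsingularity}). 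The hypothesis ``hyperbolic or sectionally dissipative'' is used for exactly one thing: to guarantee that $DX(\sigma)|_E$ is non-singular (if $\sigma$ is non-hyperbolic and sectionally dissipative, the unique eigenvalue of maximal real part is $0$ and sits in $F$, so by strict domination all eigenvalues on $E$ have negative real part). This non-degeneracy is what makes the cone-like region argument (Lemma~\ref{Lem:smallscale}) go through: after pushing $x_n$ forward a bounded amount of time one lands in a region where the rescaled normal disk $N_{z_n}(\delta|X(z_n)|)$ necessarily meets the one-dimensional center manifold $W^F(\sigma)$. Since $W^F(\sigma)$ carries at most two orbits, infinitely many disjoint basins cannot all intersect it --- contradiction.

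Your proposed mechanisms do not appear to close. The shadowing idea for hyperbolic $\sigma$ would replace the passage of $\gamma_n$ by a different orbit segment; but the conclusion concerns the actual sinks $\gamma_n$, and there is no reason the spliced object is periodic or a sink. The sectional-dissipativity idea bounds $\|\wedge^2 e^{tDX(\sigma)}\|$ only by $e^{\beta t}$ with $\beta$ arbitrarily small but nonnegative; since the passage time $\tau_k\to\infty$, any $\beta>0$ wipes out the $\alpha$-contraction accumulated on the rest of $\gamma_k$, and $\beta=0$ gives no margin to absorb the nonlinear distortion you yourself flag as blowing up like $\|X\|^{-2}$. More fundamentally, you never obtain a uniform lower bound on the basin radius of $\gamma_k$ near $\sigma$, which is exactly what the rescaled flow and the $W^F$-intersection argument supply. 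The missing idea is that the contraction hypothesis itself produces a codimension-one dominated splitting at $\sigma$, and the contradiction comes from the one-dimensionality of $W^F(\sigma)$, not from a volume count.
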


Similar results for diffeomorphisms or non-singular flows were got by Pliss \cite{Pli72}. If $X$ has no singularities and $X$ has infinitely many $(\alpha,T)$-uniform sinks $\{\gamma_n\}$, then by using Pliss Lemma, for any $\gamma_n$, there is a point $x_n\in\gamma_n$ such that $x_n$ has its stable manifold of uniform size which is independent with $n$. From the fact that $M$ has finite volume, we can get a contradiction.

Liao \cite{Lia89} proved Theorem~\ref{Thm:main} with an additional assumption: $X$ is a \emph{star} vector field. As $X$ is star, every singularity of $X$ is hyperbolic. If every singularity of $X$ is hyperbolic and $X$ has infinitely many $(\alpha,T)$-uniform sinks ${\gamma_n}$, then by his estimation, for each $n$, there is a point $x_n\in \gamma_n$ such that $x_n$ has its stable manifold with some uniformity (after the rescaling of the flow). So if $x_n$ is far away from singularities, we can get a contradiction. Thus we can assume that $\lim_{n\to\infty}x_n=\sigma$ for some singularity $\sigma$. Then Liao proved that $\sigma$ has a strong unstable one-dimensional manifold and for $n$ large enough, the basin of $x_n$ intersects the strong unstable manifold. Since a one-dimensional strong unstable manifold contains only two orbits, we can get a contradiction.

But in Liao's argument, if $\sigma$ is not hyperbolic, we will encounter two difficulties:
\begin{itemize}

\item $\sigma$ may not have a strong unstable one-dimensional manifold. To solve this difficulty, we need to analysis the dynamics on the central manifold of $\sigma$.

\item When $\sigma$ is hyperbolic, then we know that for $x$ close to $\sigma$,
$$\frac{|X(x)|}{d(x,\sigma)}$$
is uniformly bounded. Thus, Liao needed to consider a cone along the unstable direction.
When $\sigma$ is non-hyperbolic, we need to consider some general cone-like region along the central manifold of $\sigma$.

\end{itemize}

\section{Preliminaries}

\subsection{The rescaled linear Poincar\'e flow and the stable manifold theorem}

Let $X$ be a $C^1$ vector field. For each regular point $x$, one defines its normal space ${\cal N}_x$ to be
${\cal N}_x=\{v\in T_x M:~<v,X(x)>=0\}.$ Denote by
$${\cal N}=\bigcup_{x\in M\setminus{\rm Sing}(X)}{\cal N}_x.$$
${\cal N}$ is called the \emph{normal bundle} of $X$. Note that $M\setminus {\rm Sing}(X)$ may be not
 compact. Thus, ${\cal N}$ may be defined on some non-compact set. We notice that $\psi_t$ is defined on ${\cal N}$. Given any regular point $x\in M$, $t\in{\mathbb R}$ and any $v\in{\cal N}_x$, we define
$$\psi_t^*(v)={\psi_t(v)\over \|\Phi_t|_{\left< X(x)\right>}\|}=\frac{|X(x)|}{|X(\phi_t(x))|}\psi_t(v).$$
$\psi_t^*$ is called the \emph{rescaled linear Poincar\'e flow} w.r.t. $X$.

\begin{Definition}

Let $C>0$, $\eta>0$ and $T>0$. A regular point $x\in M$ is called \emph{$(C,\eta,T)$-$\psi^*$-contracted}, if there is a sequence of times $0=t_0<t_1<\cdots<t_n<\cdots$ such that
\begin{itemize}

\item $t_i-t_{i-1}\le T$ and $\lim_{n\to\infty}t_n=\infty$.

\item $\prod_{i=1}^{n}\|\psi^*_{t_i-t_{i-1}}(\phi_{t_{i-1}}(x))\|\le C{\rm e}^{-\eta t_n}$ for any $n\ge 1$.

\end{itemize}

One says that a regular point $x\in M$ is  \emph{$(C,\eta,T)$-expanded} if it is $(C,\eta,T)$-contracted for $-X$.
\end{Definition}

For a normed vector space $V$ and $r>0$, denote by
$$V(r)=\{v\in V:~|v|\le r\}.$$

For any regular point $x\in M$, we define the local normal manifold $N_x(\beta)=\exp_x({\cal N}_x(\beta))$ ($\beta>0$).
 The flow $\phi_t$ defines a local diffeomorphim from a small neighborhood of $N_x(\beta)$ to $N_{\phi_t(x)}(\beta)$, which is denoted by $P_{x,\phi_t(x)}$, and which is called
 the \emph{sectional Poincar\'e map}.

Liao \cite{Lia89} had the following estimations on the size of stable manifolds. One can see \cite[Section 2]{GaY12} for a geometric proof.

\begin{Lemma}\label{Lem:uniformestimate}
Let  $X$ be a $C^1$ vector field on $M$. Given $C>0$,  $\eta>0$ and $T>0$,  there is $\delta=\delta(C,\eta,T)>0$ such that
 for any $(C, \eta,T)$-$\psi^*$-contracted point $x$, one has
$N_x(\delta|X(x)|)$ is in the domain of the sectional Poincar\'e map $P_{x,\phi_t(x)}$ for any $t\ge 0$, and
$$\lim_{t\to\infty}{\rm diam}(P_{x,\phi_t(x)}({N}_x(\delta|X(x)|)))=0.$$

\end{Lemma}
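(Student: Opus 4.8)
The plan is to prove this as a local stable-manifold estimate for the sequence of \emph{rescaled sectional Poincar\'e maps}, with all constants uniform; the heart of the matter is a rescaling argument that keeps the nonlinearity uniformly small whether or not the base point is near a singularity. For a regular point $x$ write $\ell_x=|X(x)|$ and let $E_x\colon\cN_x(r)\to N_x(\ell_x r)$, $E_x(v)=\exp_x(\ell_x v)$, which is a diffeomorphism once $\ell_x r$ is below the injectivity radius of $M$ and is $(2\sup_M|X|)$-Lipschitz for such $r$. For $0\le s\le 2T$ I consider
$$g_x^s:=E_{\phi_s(x)}^{-1}\circ P_{x,\phi_s(x)}\circ E_x,$$
so that $g_x^s(0)=0$, and, since $\psi_s$ is by definition the derivative of the sectional Poincar\'e map at the base point while $DE_x(0)=\ell_x\,\mathrm{id}$, one checks $Dg_x^s(0)=\psi_s^*(x)$. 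From $\|\Phi_{\pm s}\|\le e^{s\,\|DX\|_{C^0}}$ together with the Gr\"onwall bound $e^{-s\,\|DX\|_{C^0}}\le |X(\phi_s(x))|/|X(x)|\le e^{s\,\|DX\|_{C^0}}$, one also gets $\|\psi_s^*(x)^{\pm1}\|\le\Lambda:=e^{4T\,\|DX\|_{C^0}}$ for all regular $x$ and $0\le s\le 2T$.

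The key step I would isolate is the following uniform $C^1$-smallness of the nonlinearity: for every $\epsilon>0$ there is $r_0=r_0(\epsilon,X,T)>0$ such that $g_x^s$ is well-defined and $C^1$ on $\cN_x(r_0)$ with $\sup_{v\in\cN_x(r_0)}\|Dg_x^s(v)-\psi_s^*(x)\|\le\epsilon$, for every regular $x$ and $0\le s\le 2T$. I would prove it in two regimes. On each compact set $\{\ell_x\ge\rho\}$ a uniform flow-box argument gives the maps $P_{x,\phi_s(x)}$ on normal discs of a fixed radius, with derivatives jointly continuous in $(x,s,\cdot)$, hence uniformly continuous; rescaling by $\ell_x\in[\rho,\sup_M|X|]$ and shrinking $r_0$ then makes $\|Dg_x^s(v)-Dg_x^s(0)\|$ as small as wanted. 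For $\ell_x<\rho$ the orbit arc $\{\phi_s(x):0\le s\le 2T\}$ has length $O(\rho)$ (Gr\"onwall again); after passing to the chart $\exp_x$ and rescaling the whole picture by $\ell_x^{-1}$, $X$ becomes a vector field $\hat X$ with $\hat X(0)$ a unit vector, uniformly $C^1$-bounded on a fixed ball, and — this is the decisive point — with modulus of continuity of $D\hat X$ on that ball bounded by $\omega_{DX}(O(\rho))+O(\rho)\to0$ as $\rho\to0$, where $\omega_{DX}$ is the modulus of continuity of $DX$. Hence for $\rho$ small the time-$s$ maps of $\hat X$, and therefore the maps $g_x^s$, are as close to affine as required on a fixed ball; first taking $\rho$ small, then $r_0$ small, proves the claim. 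The hard part will be exactly this claim: the argument must work with only $C^1$ regularity of $X$, and that is what the decay of the modulus of continuity of $D\hat X$ under rescaling supplies, together with making the flow-box estimates near a singularity genuinely uniform.

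Given a $(C,\eta,T)$-$\psi^*$-contracted point $x$ with times $0=t_0<t_1<\cdots$, $t_i-t_{i-1}\le T$, $t_n\to\infty$, I would first regroup: greedily pick a subsequence $0=t_{i_0}<t_{i_1}<\cdots$ with $t_{i_{k+1}}-t_{i_k}\in(T,2T]$ (possible because consecutive original gaps are $\le T$ and $t_n\to\infty$). By the cocycle identity $\psi^*_{t_{i_{k+1}}-t_{i_k}}(\phi_{t_{i_k}}(x))$ is the composition of the intermediate $\psi^*$'s, so its norm is at most the product of their norms, and telescoping yields $\prod_{k=1}^{K}\|\psi^*_{t_{i_{k+1}}-t_{i_k}}(\phi_{t_{i_k}}(x))\|\le Ce^{-\eta t_{i_K}}$ for every $K$. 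Thus, at the cost of replacing $T$ by $2T$ (harmless, since $\delta$ is allowed to depend on $T$), I may assume the steps $s_j=t_j-t_{j-1}$ lie in a fixed interval $[T_0,2T]$ with $T_0>0$; in particular $n\le t_n/T_0$ along any orbit segment.

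Writing $x_j=\phi_{t_j}(x)$, $A_j=\psi^*_{s_j}(x_{j-1})$ and $g_j=g_{x_{j-1}}^{s_j}$, I apply the claim with some $\epsilon\le\eta T_0/(2\Lambda)$, which fixes $r_0$; then $g_j(0)=0$ and $\mathrm{Lip}(g_j|_{\cN_{x_{j-1}}(r_0)})\le\|A_j\|+\epsilon$. Since $\|A_j\|\ge\Lambda^{-1}$ (because $\|A_j^{-1}\|\le\Lambda$) one has $\|A_j\|+\epsilon\le\|A_j\|(1+\epsilon\Lambda)$, and therefore, for every $n$,
$$\prod_{i=1}^{n}(\|A_i\|+\epsilon)\ \le\ \Big(\prod_{i=1}^{n}\|A_i\|\Big)(1+\epsilon\Lambda)^n\ \le\ Ce^{-\eta t_n}\,e^{\epsilon\Lambda t_n/T_0}\ \le\ Ce^{-(\eta/2)t_n}.$$
Put $\delta:=r_0/\max(1,C)$, so $N_x(\delta|X(x)|)=E_x(\cN_x(\delta))$. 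A straightforward induction then shows that for $v\in\cN_x(\delta)$ the iterates $y_n=g_n\circ\cdots\circ g_1(v)$ are all defined with $|y_n|\le\big(\prod_{i=1}^{n}(\|A_i\|+\epsilon)\big)|v|\le Ce^{-(\eta/2)t_n}\delta\le r_0$, which gives that $N_x(\delta|X(x)|)$ lies in the domain of every $P_{x,x_n}$, while $\mathrm{Lip}(g_n\circ\cdots\circ g_1|_{\cN_x(\delta)})\le Ce^{-(\eta/2)t_n}$, so the image has diameter $\le 2\delta C e^{-(\eta/2)t_n}\to0$. For a general $t\in(t_{n-1},t_n]$ one writes $P_{x,\phi_t(x)}=P_{x_{n-1},\phi_t(x)}\circ P_{x,x_{n-1}}$, i.e.\ $g_{x_{n-1}}^{t-t_{n-1}}\circ g_{n-1}\circ\cdots\circ g_1$ in the rescaled charts, and since $g_{x_{n-1}}^{t-t_{n-1}}$ is $(\Lambda+\epsilon)$-Lipschitz on $\cN_{x_{n-1}}(r_0)$ and $t_{n-1}\ge t-2T\to\infty$, the diameter of $P_{x,\phi_t(x)}(N_x(\delta|X(x)|))$, measured in $M$ through the $(2\sup_M|X|)$-Lipschitz chart $E_{\phi_t(x)}$, tends to $0$ as $t\to\infty$. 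Unwinding the regrouping, $\delta$ depends only on $C,\eta$ and $T$, which is what is claimed.
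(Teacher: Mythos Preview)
Your proof is correct and follows essentially the same approach as the paper's sketch (which defers details to \cite{Lia89} and \cite{GaY12}): your rescaled maps $g_x^s$ are precisely the paper's ${\cal P}^*_{x,\phi_s(x)}$, your uniform $C^1$-smallness claim is the ``uniform continuity properties'' the paper cites from \cite{GaY12}, and your final Lipschitz iteration is the ``classical case of diffeomorphisms'' the paper cites from \cite{PuS00}. The regrouping of times to force steps in $[T_0,2T]$ and the explicit two-regime argument (compactness away from singularities, modulus-of-continuity decay under rescaling near them) are useful details that the paper leaves to its references.
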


It follows that ${N}_x(\delta|X(x)|)$ is in the stable set of $x$ after a reparametrization. Although we don't want to give the proof of Lemma~\ref{Lem:uniformestimate} again, we would like to give some idea about the proof. First we can consider the fibered map ${\cal P}_{t,\phi_t(x)}:~{\cal N}_x(\beta)\to {\cal N}_{\phi_t(x)}(\beta)$, which is defined by ${\cal P}_{x,\phi_t(x)}= \exp^{-1}_{\phi_t(x)} \circ P_{x,\phi_t(x)} \circ\exp_x$, whose dynamics are conjugate to $P_{t,\phi_t(x)}$. The linearization of ${\cal P}_{x,\phi_t(x)}$ is the linear Poincar\'e flow $\psi_t$. But since $X$ may contain singularities, the linearized neighborhood is not uniform. Then we define the rescaling of ${\cal P}_{x,\phi_t(x)}$ by
$${\cal P}^*_{x,\phi_t(x)}(v)=\frac{{\cal P}_{x,\phi_t(x)}(|X(x)|v)}{|X(\phi_t(x))|}.$$
The linearization of ${\cal P}^*_{x,\phi_t(x)}$ is $\psi_t^*$ and the linearized neighborhood is uniform; by a careful calculation, $D {\cal P}^*_{x,\phi_t(x)}$ also have some uniform continuity properties (See \cite{GaY12}). By our assumption, if $x$ is $(C, \eta,T)$-$\psi^*$-contracted, then $x$ has its stable manifold of uniform size w.r.t. ${\cal P}^*$: the proof follows from the classical case of diffeomorphisms (see \cite[Corollary 3.3]{PuS00} for instance).

\subsection{A lemma of Pliss type}

\begin{Lemma}\label{Lem:infinitePliss}

Given $C>0$ and $0<\lambda_1<\lambda_2<1$, there is $N=N(C,\lambda_1,\lambda_2)\in\NN$ such that: for any sequence of numbers $\{a_n\}$ satisfying the following properties:
$$\sum_{i=1}^n a_i\le C+n\lambda_1,~~~\forall n\in\NN,$$

then there is $L\le N$ such that
$$\sum_{i=1}^n a_{L+i}\le n\lambda_2,~~~\forall n\in\NN.$$

\end{Lemma}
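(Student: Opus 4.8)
The plan is to reduce this to a finite, quantitative version of the classical Pliss lemma. The hypothesis $\sum_{i=1}^n a_i\le C+n\lambda_1$ for all $n$ should be read as: the Birkhoff averages $\tfrac1n\sum_{i=1}^n a_i$ eventually stay below any fixed level strictly between $\lambda_1$ and $\lambda_2$. First I would fix an intermediate value $\mu$ with $\lambda_1<\mu<\lambda_2$, and look at the ``tail deficit'' function. Concretely, set $S_0=0$ and $S_n=\sum_{i=1}^n(a_i-\mu)$, so that the hypothesis gives $S_n\le C+n(\lambda_1-\mu)$, and since $\lambda_1-\mu<0$ we have $S_n\to-\infty$; in particular $S_n\le 0$ for all $n\ge N_0$ where $N_0=N_0(C,\lambda_1,\mu)$ is the smallest integer with $C+n(\lambda_1-\mu)\le 0$, i.e. $N_0=\lceil C/(\mu-\lambda_1)\rceil$.

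The key step is to choose $L\le N_0$ to be an index at which $S_L$ attains its maximum over $0\le L\le N_0$ — equivalently, the last time before $N_0$ that the sequence $S_n$ reaches a new record high (taking $L=0$ if $S_n<S_0=0$ for all $1\le n\le N_0$, which is consistent since $S_n\le 0$ once $n\ge N_0$). For this $L$ I would then check $\sum_{i=1}^n a_{L+i}\le n\mu$ for all $n\ge 1$: writing $\sum_{i=1}^n a_{L+i}-n\mu = S_{L+n}-S_L$, it suffices that $S_{L+n}\le S_L$ for every $n\ge 1$. When $L+n\le N_0$ this is the maximality of $S_L$ on $[0,N_0]$; when $L+n>N_0$ we have $S_{L+n}\le 0\le S_L$ using $S_L\ge S_0=0$ (the max is at least the value at $0$). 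Hence $\sum_{i=1}^n a_{L+i}\le n\mu<n\lambda_2$, which is the desired conclusion (slightly stronger, with $\mu$ in place of $\lambda_2$). Setting $N=N_0=\lceil C/(\mu-\lambda_1)\rceil$, which depends only on $C,\lambda_1,\lambda_2$ once $\mu$ is pinned down (say $\mu=(\lambda_1+\lambda_2)/2$), finishes the argument.

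The one point that needs a little care — and is really the only ``obstacle'' — is the bookkeeping at the boundary $n=N_0$: one must make sure the maximizing index $L$ is chosen so that both ranges ($L+n\le N_0$ and $L+n>N_0$) are covered, which is why I take the max over the closed range $0\le L\le N_0$ and exploit $S_{N_0}\le 0=S_0$ so that no record is missed past $N_0$. Everything else is elementary manipulation of partial sums; no dynamics enter, and the constants $C,\lambda_1,\lambda_2$ propagate transparently into $N$.
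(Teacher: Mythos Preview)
Your argument is correct. The ``max of partial sums'' trick with $S_n=\sum_{i=1}^n(a_i-\mu)$ is exactly the right move: once $S_n\le 0$ for all $n\ge N_0$, picking $L\in\{0,\dots,N_0\}$ maximizing $S_L$ guarantees $S_{L+n}\le S_L$ in both ranges (by maximality for $L+n\le N_0$, and by $S_{L+n}\le 0\le S_0\le S_L$ for $L+n>N_0$), which is precisely the desired inequality. A small simplification: the intermediate level $\mu$ is unnecessary --- you may take $\mu=\lambda_2$ from the start, since the conclusion only asks for $\le n\lambda_2$; this gives the sharper $N=\lceil C/(\lambda_2-\lambda_1)\rceil$ and removes one parameter.

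Your route is genuinely different from the paper's. The paper argues by contradiction: it first shows (by a $\limsup$ argument) that \emph{some} index $m$ works, then takes the minimal such $m$ and, via a backward-inductive Claim, proves $\sum_{i=1}^{m-1}a_i>(m-1)\lambda_2$, which for $m>N$ violates the hypothesis. Your proof is direct and constructive --- it actually exhibits $L$ as the maximizer of a finite list --- and avoids both the contradiction and the induction. What the paper's approach buys is that it works straight at the level $\lambda_2$ without any partial-sum reformulation; what yours buys is transparency and an explicit formula for $N$. Both land on essentially the same threshold $N\approx C/(\lambda_2-\lambda_1)$.
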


\begin{proof}
We choose $N$ such that $C+N\lambda_1 < N\lambda_2$. Given any sequence of numbers $\{a_n\}$ satisfying $\sum_{i=1}^n a_i\le C+n\lambda_1,~\forall n\in\NN$, there is $m\in\NN$ such that for any $n\in\NN$, one has $$\sum_{i=1}^n a_{m+i}\le n\lambda_2.$$ This is because otherwise, for each $j$, there is $n_j$ such that
$\sum_{i=1}^{n_j} a_{j+i}\ge n_j\lambda_2.$ Thus $\limsup_{n\to\infty}\frac{1}{n} \sum_{i=1}^n a_i\ge\lambda_2$. This fact contradicts to the assumption.

Now we will show the existence of $L$, which is required to be less than $N$. If not, there is a sequence of numbers $\{a_n\}$ satisfying
$$\sum_{i=1}^n a_i\le C+n\lambda_1,~~~\forall n\in\NN,$$
but for any $m$ satisfying
$$\sum_{i=1}^n a_{m+i}\le n\lambda_2,~~~\forall n\in\NN$$
one has $m>N$. We take a minimal $m$ with the above property. This implies $a_{m-1}>\lambda_2$. Inductively, we can have that
\begin{Claim}
For any $j\le m-1$, $\sum_{k=0}^{m-1-j}a_{j+k}> (m-j)\lambda_2$.

\end{Claim}
\begin{proof}
We know that it is true for $j=m-1$. If it is already true for $j,j+1,\cdots,m-1$, and it is not true for $j-1$, then we will have
$$\sum_{k=0}^{m-1-(j-1)}a_{j-1+k}\le (m-j+1)\lambda_2,$$
$$\sum_{k=0}^{m-1-\ell}a_{\ell+k}> (m-\ell)\lambda_2,~~~\forall \ell\in\{j,j+1,\cdots,m-1\}.$$

This will imply that for any $0\le \ell\le m-j-1$, one has
$$\sum_{i=0}^{m-j}a_{j-1+i}\le (m-j+1)\lambda_2.$$
By the definition of $m$, we know that $j-1$ also have the same property of $m$. This will contradict to the minimality of $m$.

\end{proof}

As a consequence of the above claim, one has $\sum_{i=1}^{m-1}a_i>(m-1)\lambda_2\ge N\lambda_2\ge C+N\lambda_1$. This gives a contradiction to the assumption of $\{a_n\}$.

\end{proof}

\section{The splitting of the singularity}

Recall the sphere bundle $S^1(M)$ which consists of all unit  tangent vectors of the tangent bundle $TM$:
$$ S^1(M)=\{v\in TM, |v|=1\}.$$
Thus, $\Phi_t$ induces a continuous flow $\Phi_t^1$ on $S^1(M)$,   where
$$  \Phi_t^1(v)= {\Phi_t(v)\over |\Phi_t(v)|} \quad (v\in S^1 (M)). $$

Define the following frame by
$${\cal F}_2(M)=\bigcup_{x\in M}\{(u,v):~u,v\in T_x M,~u\neq 0,~u\perp v\}.$$
and the normalized frame by
$${\cal F}^\#_2(M)=\bigcup_{x\in M}\{(u,v):~u,v\in T_x M,~|u|=1,~u\perp v\}.$$

Define the flows $\chi_t$ on ${\cal F}_2(M)$ and $\chi_t^\#$ on ${\cal F}^\#_2(M)$ by the following way:

$$\chi_t(u,v)=\{\Phi_t(u),~\Phi_t(v)-\frac{<\Phi_t(u),\Phi_t(v)>}{|\Phi_t(v)|^2} \Phi_t(u)\}$$

$$\chi_t^\#(u,v)=\{\frac{\Phi_t(u)}{|\Phi_t(u)|},~\Phi_t(v)-\frac{<\Phi_t(u),\Phi_t(v)>}{|\Phi_t(v)|^2} \Phi_t(u)\}$$

Note that ${\cal F}^\#_2(M)$ is a complete metric space and $\chi_t^\#$ is a continuous flow.

\begin{Definition}
For any set $\Lambda\subset S^1(M)$, we say that $\chi_t$ is \emph{dominated} on $\Lambda$ if there are $C>0$, $\lambda>0$ such that for any $t>0$ and any $u\in \Lambda$, one has
$$\frac{\|{\rm proj}_2\chi_t (u,\cdot)\|}{\|{\rm proj}_1 \chi_t (u, \cdot)\|}=
\frac{\|{\rm proj}_2\chi_t(u,\cdot)\|}{|\Phi_t(u)|}\le C{\rm e}^{-\lambda t}.$$
\end{Definition}

\begin{Lemma}
If $\Lambda\subset S^1(M)$ is dominated w.r.t. $\chi_t$, then its closure $\overline{\Lambda}$
is also dominated w.r.t. $\chi_t$.
\end{Lemma}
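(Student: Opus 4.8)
The plan is to exploit that, once the time $t$ is fixed, the inequality in the definition of domination is a \emph{closed} condition on the point $u\in S^1(M)$, and a closed condition satisfied at every point of $\Lambda$ is satisfied at every point of $\overline\Lambda$. Concretely, for fixed $t>0$ I would look at the function
$$g_t(u):=\frac{\|{\rm proj}_2\chi_t(u,\cdot)\|}{|\Phi_t(u)|},\qquad u\in S^1(M),$$
and show that it is continuous on $S^1(M)$. Granting this, let $C>0$ and $\lambda>0$ be constants witnessing that $\Lambda$ is dominated; then for each $t>0$ the set $A_t:=\{u\in S^1(M):g_t(u)\le C{\rm e}^{-\lambda t}\}$ is closed, $\Lambda\subset A_t$, hence $\overline\Lambda\subset A_t$. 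Intersecting over all $t>0$ yields $g_t(u)\le C{\rm e}^{-\lambda t}$ for every $u\in\overline\Lambda$ and every $t>0$, i.e.\ $\overline\Lambda$ is dominated with the \emph{same} constants $C,\lambda$. So the whole content of the lemma is the continuity, for each fixed $t$, of $g_t$.

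To prove that continuity I would argue as follows. The denominator $u\mapsto|\Phi_t(u)|=\|{\rm proj}_1\chi_t(u,\cdot)\|$ is continuous on $S^1(M)$ and strictly positive, because $\Phi_t=d\phi_t$ is a fibrewise linear automorphism of $TM$. For the numerator, the only delicate point is that $v\mapsto{\rm proj}_2\chi_t(u,v)$ --- sending $v$ to $\Phi_t(v)$ followed by orthogonal projection off the line $\langle\Phi_t(u)\rangle$ --- is a linear map whose domain $\langle u\rangle^\perp\subset T_xM$ and target $\langle\Phi^1_t(u)\rangle^\perp\subset T_{\phi_t(x)}M$ both vary with $u$ (here $x$ is the footpoint of $u$). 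I would dispose of this by passing to the linear map $\widetilde L^u_t\colon T_xM\to T_{\phi_t(x)}M$, $\widetilde L^u_t(w)=\Phi_t(w)-\langle\Phi_t(w),e_u\rangle\,e_u$ with $e_u=\Phi_t(u)/|\Phi_t(u)|$; this $\widetilde L^u_t$ annihilates $u$ and restricts to ${\rm proj}_2\chi_t(u,\cdot)$ on $\langle u\rangle^\perp$, so, $\langle u\rangle$ and $\langle u\rangle^\perp$ being orthogonal, $\|\widetilde L^u_t\|=\|{\rm proj}_2\chi_t(u,\cdot)\|$. Now $u\mapsto\widetilde L^u_t$ is a continuous family of linear maps between the fibres of $TM$ over $u$ and over $\phi_t$ of the footpoint --- it is built from $\Phi_t$ and the metric by algebraic operations whose only denominator, $|\Phi_t(u)|$, never vanishes --- and the operator norm of a continuously varying family of linear maps between fibres of a Riemannian vector bundle is a continuous function of the parameter, as one sees by working in a local orthonormal trivialisation along any convergent sequence $u_n\to u$. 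Hence $g_t$ is continuous and the lemma follows.

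I do not expect a genuine obstacle: the statement is soft, and the one step meriting the argument above is the continuity of $u\mapsto\|{\rm proj}_2\chi_t(u,\cdot)\|$, whose only subtlety is that the domain of that operator moves with $u$ --- handled by the extension $\widetilde L^u_t$. Note that compactness of $\Lambda$ or $\overline\Lambda$ is not used; one only applies, separately for each $t$ and with $C,\lambda$ held fixed, the elementary fact that a continuous function on $S^1(M)$ obeying a pointwise bound on $\Lambda$ obeys it on $\overline\Lambda$.
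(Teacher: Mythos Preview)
Your proposal is correct and is essentially the same approach as the paper's: the paper's proof consists of the single sentence ``By taking a limit, we know that this lemma is true,'' and your argument is simply a careful unpacking of that limit, showing in detail why $g_t$ is continuous so that the pointwise inequality passes to the closure with the same constants $C,\lambda$.
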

\proof By taking a limit, we know that this lemma is true. \qed

\begin{Lemma}
For every singularity $\sigma$, given $t\in \mathbb{R}$, $\Phi_t (u)$ is $C^{\infty}$ w.r.t. $u\in T_{\sigma} M$ (although $X$ is only $C^1$).
\end{Lemma}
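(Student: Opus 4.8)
The plan is to exhibit $\Phi_t$ restricted to the single fibre $T_\sigma M$ as a matrix exponential, so that its dependence on $u$ is in fact linear. Although $\Phi_t$, as a bundle map over $M$, is merely continuous because $X$ is only $C^1$, nothing forces its restriction to one fibre to be rough. First I would recall the classical fact from ODE theory: since $X$ is $C^1$, the flow $\phi_t$ is $C^1$ in the space variable, and for each fixed $x$ the derivative $t\mapsto \Phi_t|_{T_xM}=d_x\phi_t$ is the fundamental solution of the variational (linearized) equation
$$\frac{d}{dt}\,d_x\phi_t \;=\; DX(\phi_t(x))\cdot d_x\phi_t,\qquad d_x\phi_0=\mathrm{Id}.$$
This is standard even when $X$ is only $C^1$ (see e.g. Hartman, \emph{Ordinary Differential Equations}); it is the source of the phenomenon, namely that the $C^1$ roughness of $X$ does not propagate into the $u$-dependence of $\Phi_t$ over a fixed fibre.

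Next I would specialize to $x=\sigma$. Since $\sigma$ is a singularity we have $\phi_t(\sigma)=\sigma$ for all $t$, so the coefficient $DX(\phi_t(\sigma))=DX(\sigma)$ is constant in $t$, and the variational equation becomes the constant-coefficient linear system $\dot V = DX(\sigma)\,V$ on $T_\sigma M$ with $V(0)=\mathrm{Id}$, whose solution is $V(t)=e^{t\,DX(\sigma)}$. Hence for every $u\in T_\sigma M$ and every $t\in\mathbb{R}$,
$$\Phi_t(u) \;=\; e^{t\,DX(\sigma)}\,u .$$

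The conclusion is then immediate: the map $u\mapsto e^{t\,DX(\sigma)}u$ is a linear endomorphism of the finite-dimensional vector space $T_\sigma M$, hence $C^\infty$ (indeed real-analytic) in $u$; and because $A\mapsto e^{tA}$ is entire, it is real-analytic in $t$ as well. The only step requiring care is the first one — that $d_x\phi_t$ solves the variational equation when $X$ is merely $C^1$ — and this is classical, so there is essentially no further obstacle. I would also remark that this regularity is exactly what is needed afterwards to speak of center/stable/unstable subspaces and invariant manifolds for the induced flows $\Phi_t^1$ on $S^1(M)$ and $\chi_t^\#$ on ${\cal F}^\#_2(M)$ over a (possibly non-hyperbolic) singularity, since those require differentiating along the singular fibre only.
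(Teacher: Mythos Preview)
Your proof is correct and lands on the same key point as the paper: the restriction of $\Phi_t$ to a single fibre is linear in $u$, hence $C^\infty$. The paper's one-line argument simply observes that $\Phi_t=d\phi_t$ is by definition a linear map on each tangent space; your route through the variational equation and the explicit formula $\Phi_t(u)=e^{t\,DX(\sigma)}u$ is a little longer than necessary for the stated lemma, but it is certainly correct and the explicit exponential representation is useful context for the subsequent analysis over $S^1_\sigma M$.
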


\proof This is true because that $\Phi_t$ is linear w.r.t. $u\in T_{\sigma}M$.\qed

\begin{Lemma}
For any $u \in S_{\sigma} ^1 M$ and $t\in \mathbb{R}$, one has
$$ D_u \Phi_t^1 = \frac{{\rm proj}_2 \chi_t (u, \cdot)}{ |\Phi_t (u)|}.$$
\end{Lemma}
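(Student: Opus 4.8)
The plan is to compute the differential directly from the formula $\Phi_t^1(v)=\Phi_t(v)/|\Phi_t(v)|$, using the preceding two lemmas to handle the base-point variable. I read $D_u\Phi_t^1$ as the differential at $u$ of the restriction of $\Phi_t^1$ to the fibre $S_\sigma^1 M$ over the singularity (this is forced by the shape of the right-hand side, whose image lies in that fibre). First I would record the relevant identifications. Since $\phi_t(\sigma)=\sigma$, the map $\Phi_t$ restricts to a linear isomorphism of $T_\sigma M$, which is $C^\infty$ in the base point by the previous lemma; hence $\Phi_t^1$ restricts to a smooth self-map of the unit sphere $S_\sigma^1 M$, namely $u\mapsto \Phi_t(u)/|\Phi_t(u)|$. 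At $u\in S_\sigma^1 M$ the tangent space is $u^\perp=\{w\in T_\sigma M:\ \langle w,u\rangle=0\}$, and at the image point it is $(\Phi_t(u))^\perp$, so $D_u\Phi_t^1$ is a linear map $u^\perp\to (\Phi_t(u))^\perp$, which is exactly the type of object on the right-hand side.

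Next I would fix $w\in u^\perp$ together with a smooth curve $s\mapsto u(s)\in S_\sigma^1 M$ with $u(0)=u$ and $u'(0)=w$, and differentiate $s\mapsto \Phi_t(u(s))/|\Phi_t(u(s))|$ at $s=0$. By the linearity of $\Phi_t$ on $T_\sigma M$ one has $\tfrac{d}{ds}\big|_{0}\Phi_t(u(s))=\Phi_t(w)$ and therefore $\tfrac{d}{ds}\big|_{0}|\Phi_t(u(s))|=\langle\Phi_t(u),\Phi_t(w)\rangle/|\Phi_t(u)|$. The quotient rule then gives
\begin{align*}
D_u\Phi_t^1(w)&=\frac{\Phi_t(w)}{|\Phi_t(u)|}-\frac{\langle\Phi_t(u),\Phi_t(w)\rangle}{|\Phi_t(u)|^{3}}\,\Phi_t(u)\\
&=\frac{1}{|\Phi_t(u)|}\Big(\Phi_t(w)-\frac{\langle\Phi_t(u),\Phi_t(w)\rangle}{|\Phi_t(u)|^{2}}\,\Phi_t(u)\Big).
\end{align*}
By the definition of $\chi_t$, the vector in parentheses is precisely ${\rm proj}_2\chi_t(u,w)$, and $\|{\rm proj}_1\chi_t(u,\cdot)\|=|\Phi_t(u)|$; hence $D_u\Phi_t^1(w)={\rm proj}_2\chi_t(u,w)/|\Phi_t(u)|$. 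Since $w\in u^\perp$ was arbitrary, this is the asserted identity.

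There is no real obstacle here: the statement is essentially a repackaging of the quotient rule. The only points deserving attention are (i) that the differentiation is in the base point $u$ rather than in time, which is exactly why the linearity (and smoothness in $u$) of $\Phi_t|_{T_\sigma M}$ supplied by the previous lemma is invoked to evaluate $\tfrac{d}{ds}\Phi_t(u(s))$; and (ii) the mild check that the right-hand side indeed lands in $(\Phi_t(u))^\perp=T_{\Phi_t^1(u)}S_\sigma^1 M$, which holds because ${\rm proj}_2\chi_t(u,w)$ is orthogonal to $\Phi_t(u)$ by construction (and, incidentally, the map $w\mapsto{\rm proj}_2\chi_t(u,w)$ annihilates $u$, so passing from $T_\sigma M$ to $u^\perp$ does not change its operator norm). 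It is worth noting that the identity makes the domination hypothesis transparent: up to the normalizing factor $|\Phi_t(u)|$, the operator norm of $D_u\Phi_t^1$ is exactly the quantity $\|{\rm proj}_2\chi_t(u,\cdot)\|$ appearing in the definition of a dominated subset of $S^1(M)$.
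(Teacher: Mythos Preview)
Your proof is correct and follows essentially the same approach as the paper: identify $T_u S_\sigma^1 M$ with $u^\perp$, differentiate the normalization map, and recognize the result as the second component of $\chi_t$ divided by $|\Phi_t(u)|$. Your version is in fact more explicit than the paper's, which simply describes the derivative as ``project $\Phi_t(v)$ to $\mathcal{N}_{\Phi_t^1(u)}$ and scale'' without writing out the quotient-rule computation.
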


\proof  For each $u \in S_\sigma^1 M$, $T_u S^1_\sigma M$ can be identical with $\mathcal{N}_u$, where $\mathcal{N}_u=\{v\in T_\sigma M:
v\perp u\}$. $D_u \Phi_t^1$ ia s map from $\mathcal{N}_u$ to $\mathcal{N}_{\Phi_t^1(u)}$, which can be got
by the following way: for each $v\in \mathcal{N}_u$, we need to project $\Phi_t(v)$ to $\mathcal{N} _{\Phi_t^1 (u)}$. This process gives ${\rm proj}_2 \chi_t (u, \cdot)$. Since $\Phi_t(v)$ may not be unit, we need to do a scaling. This ends the proof. \qed

\begin{Proposition}\label{Pro:splittingsingularity}
Let $C>0$, $\eta>0$ and $T>0$. For a singularity $\sigma$, if there is a sequence of
$(C,\eta,T)$-$\psi^*$-contracted points \{$x_n$\} satisfying $\lim_{n\to\infty}x_n=
\sigma$, then $\sigma$ admits a dominated splitting $E\oplus F$ w.r.t. the tangent flow $\Phi_t$ and $\dim F=1$. Moreover,
any accumulation point of $\{X(x_n)/|X(x_n)|\}$ in $S^1 M$ is not in $E$.

\end{Proposition}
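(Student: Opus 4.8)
The plan is to transfer the uniform contraction of the rescaled linear Poincaré flow at the points $x_n$ into a statement about the tangent flow $\Phi_t$ at $\sigma$, using the flows $\chi_t$ and $\chi_t^\#$ on the frame bundle as the bridge. First I would package each $x_n$ as a point of ${\cal F}_2^\#(M)$ by choosing the frame $u_n = X(x_n)/|X(x_n)|$ together with a unit vector $v_n \in {\cal N}_{x_n}$, and examine $\chi_t^\#(u_n, v_n)$. The key observation is that ${\rm proj}_2\chi_t(u_n,\cdot)$ is, up to the scalar rescaling by $|X(x_n)|/|X(\phi_t(x_n))|$ that relates $\psi_t$ to $\psi_t^*$, exactly the action of the linear Poincaré flow on the normal bundle; so the hypothesis that $x_n$ is $(C,\eta,T)$-$\psi^*$-contracted says precisely that $\|{\rm proj}_2\chi_{t}(u_n,\cdot)\| / |\Phi_t(u_n)| \le C' e^{-\eta' t}$ along the sequence $t_i$ (with a routine bridging estimate to fill in the times between consecutive $t_i$, using $t_i - t_{i-1} \le T$ and continuity of $\Phi$ on compact time intervals). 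In other words, writing $\Lambda_n$ for the $\chi_t^1$-orbit closure of $u_n$ in $S^1(M)$, each $u_n$ is "uniformly dominated" with constants independent of $n$.

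Next I would take a limit. Since $x_n \to \sigma$, any accumulation point $u_\infty$ of $\{u_n\}$ lies in $S^1_\sigma M$; passing to a subsequence, $(u_n, v_n) \to (u_\infty, v_\infty)$ in the complete metric space ${\cal F}_2^\#(M)$, and by continuity of $\chi_t^\#$ the domination inequality $\|{\rm proj}_2\chi_t(u_\infty,\cdot)\| / |\Phi_t(u_\infty)| \le C' e^{-\eta' t}$ passes to the limit. This says the $\chi_t$-orbit of $u_\infty$ is dominated; and since $\sigma$ is a singularity, $\Phi_t$ restricted to $T_\sigma M$ is just the linear flow $e^{tDX(\sigma)}$, so domination of $\chi_t$ along the full orbit of $u_\infty$ under $\Phi_t^1$ is a linear-algebraic statement about $DX(\sigma)$. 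Using the lemma that the closure of a dominated set is dominated, I would enlarge to the closure of the $\Phi_t^1$-orbit of $u_\infty$, which is a compact invariant subset of $S^1_\sigma M$; standard linear-algebra/cone-field arguments (or the cocycle form of domination over a compact invariant set for a linear flow) then produce a $DX(\sigma)$-invariant splitting $T_\sigma M = E \oplus F$ that is dominated for $\Phi_t$, with the direction of $u_\infty$ lying in $F$ and $F$ having the fastest Lyapunov exponent.

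It remains to see that $\dim F = 1$ and that no accumulation point of $\{u_n\}$ lies in $E$. The second assertion will essentially be built into the construction: $u_\infty$ was shown to be the expanding-type direction that dominates everything transverse to it, so it cannot lie in the dominated subspace $E$; running the same argument for any other accumulation point $u_\infty'$ shows it too must lie in $F$ (indeed in the top Lyapunov space), and since $E$ is closed and invariant and disjoint from all these directions, no accumulation point lies in $E$. For $\dim F = 1$, the point is that the domination is \emph{one-step} in the sense that $F$ is spanned precisely by the vectors whose $\Phi_t$-growth strictly dominates the growth of every complementary direction; if $\dim F \ge 2$ there would be, inside $F$, two real directions (or a complex pair) with equal real part, and then a vector in $F$ that does not dominate another vector of $F$ — contradicting that every $u$ in the relevant invariant set satisfies the domination inequality with the \emph{same} $F$. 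I expect the main obstacle to be exactly this step: making rigorous the passage from "the single direction $u_\infty$ (and its orbit closure) is dominated" to "the whole complementary behavior splits off as a codimension-one dominated subbundle," i.e. controlling that the limiting domination is uniform over the orbit closure and genuinely one-dimensional rather than merely giving a dominated flag; this is where the linear structure at $\sigma$ (so that $\Phi_t|_{T_\sigma M}$ is a genuine linear cocycle over a point, and $\Phi_t^1$ a gradient-like flow on $S^1_\sigma M$ whose chain-recurrent pieces are the eigendirections) must be used carefully, together with the hypothesis that $\sigma$ is hyperbolic or sectionally dissipative to rule out the degenerate configurations.
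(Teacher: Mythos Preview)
Your overall framework matches the paper's: recast the $(C,\eta,T)$-$\psi^*$-contraction as domination of $\{X(x_n)/|X(x_n)|\}$ with respect to $\chi_t$, pass to an accumulation point $u_\infty\in S^1_\sigma M$, and use that closures of dominated sets stay dominated. The gap is exactly at the step you flag as the main obstacle --- upgrading domination along the single direction $u_\infty$ to a codimension-one dominated splitting of $T_\sigma M$. Your proposed fix (``standard cone-field arguments'' together with the hypothesis that $\sigma$ is hyperbolic or sectionally dissipative) does not work as stated: that hypothesis is \emph{not} part of the proposition and is not used in its proof; it enters only later, in the proof of Theorem~A, to guarantee that $DX(\sigma)|_E$ is non-singular. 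And your remark that $\Phi_t^1$ is ``gradient-like'' on $S^1_\sigma M$ presupposes what you need to prove: it fails precisely when the top Lyapunov space is a complex eigenplane or has multiplicity, which is the scenario you must exclude.

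The paper supplies a concrete device you are missing: a periodic-point argument on the sphere. Via the identity $D_u\Phi_T^1={\rm proj}_2\chi_T(u,\cdot)/|\Phi_T(u)|$, domination of $u_\infty$ becomes $\prod_{i=0}^{n-1}\|Df(f^i(u_\infty))\|\le C\lambda^n$ for the sphere map $f=\Phi_T^1$. The Pliss-type Lemma~\ref{Lem:infinitePliss} then produces infinitely many times $n_i$ with $\prod_{j=0}^{n-1}\|Df(f^{n_i+j}(u_\infty))\|\le\lambda_1^n$ for all $n$; by compactness of $S^1_\sigma M$ two such iterates $f^{n_i}(u_\infty)$, $f^{n_j}(u_\infty)$ are arbitrarily close, and the derivative bound makes $f^{n_j-n_i}$ a contraction of a small ball, forcing a periodic point $u'$ of $f$. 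This $u'$ is a genuine real eigendirection of $\Phi_{T'}$ for some $T'>0$, still dominated, and writing $\Phi_{T'}$ in block-triangular form with respect to $\langle u'\rangle\oplus\mathcal N_{u'}$ shows that its diagonal blocks are the scalar $|\Phi_{T'}(u')|$ and the operator ${\rm proj}_2\chi_{T'}(u',\cdot)$, the latter of strictly smaller norm. Hence $\Phi_{T'}$ has a \emph{unique} largest eigenvalue, giving $E\oplus F$ with $\dim F=1$ at once; and any $\chi_t$-dominated direction then automatically lies outside $E$, which handles the ``moreover'' clause.
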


\proof By our assumptions, $\{X(x_n)/|X(x_n)|\}$ is dominated  w.r.t. $\chi_t$. So any accumulation point
$u$ of $X(x_n)/ |X(x_n)|$ is dominated w.r.t. $\chi_t$. Given $T>0$, let $f= \Phi_T^1$.
By the assumptions, there are $C>0$ and $\lambda \in (0,1)$ such that for any $n\in \mathbb{N}$, one has
$$  \prod_{i=0}^{n-1} \|Df (f^i (u))\|  \leq C\lambda^n.$$
Fix some $\lambda_1\in (\lambda ,1)$. By Lemma \ref{Lem:infinitePliss},
there exists an infinite sequence $\{n_i\}_{i\in \mathbb{N}}$ such that
$$ \prod_{j=0}^{n-1} \|Df (f^j (f^{n_i} (u)))\| \leq \lambda_1^n, \forall \ n \in \mathbb{N}.$$
Choose $n_i$, $n_j$ such that $n_j>n_i$, $f^{n_i} (u)$ and $f^{n_j} (u)$ are close enough. Thus,
there is $\delta>0$ such that $f^{n_j - n_i}$  is a contracting map on $B(f^{n_i} (u), \delta)$.
This implies that $f$ has  a periodic point. Thus, there is $T^{'} >0$ and $u' \in S_\sigma^1 M$ such
that $\Phi_{T'}^1 (u') = u'$ and $u'$ is dominated w.r.t.  $\chi_t$.

Now we have that $\Phi_{T'}$  has the following form w.r.t. $\langle u'\rangle  \oplus \mathcal{N}_{u'}$
\[\displaystyle\left(\begin{array}{cc}
    \Phi_{T'}  (u')&  0 \\
    A &  {\rm proj}_2 \chi_{T'} (u', \cdot)
  \end{array}
\right).\]
This implies that $\Phi_{T'}$ has a unique largest eigenvalue. From these facts, one can
get the dominated splitting on $T_\sigma M$.

Once we know that we have the dominated splitting  $T_\sigma M = E \oplus F$ with ${\rm dim} F=1$, we
know that any point which is dominated w.r.t. $\chi_t$ cannot be in $E$. Thus, every accumulation point
of $ X(x_n)/|X(x_n)|$ is not in $E$. \qed

 \section{The intersection of local invariant manifolds}

\subsection{The central manifold at $\sigma$}

In this subsection, we assume that $\sigma$ is a singularity and $T_\sigma M$ admits a dominated splitting $E\oplus F$ w.r.t. the tangent flow $\Phi_t$ and $\dim F=1$. We will talk about the local dynamics around $\sigma$.

Since $T_\sigma M=E\oplus F$ with $\dim F=1$ is a dominated splitting of $\Phi_t$, by the plaque family theorem of \cite[Theorem 5.5]{HPS77}, there is a local embedded one-dimensional manifold $W^F(\sigma)$ which is centered at $\sigma$ and tangent to $F$ at $\sigma$; moreover, it is locally invariant in the following sense: there is a $C^1$ map $g:F\to E$ in $T_\sigma M$ satisfying $g(0)=0$ and $Dg(0)=0$, if we denote  by $W^F_r(\sigma)=\exp_x(g(-r,r))$, we have that for any $\varepsilon>0$, there is $\delta>0$ such that
$$\phi_{[-1,1]}(W^F_\delta(\sigma))\subset W^F_\varepsilon(\sigma).$$

We notice that $W^F(\sigma)$ may not be unique in general. $W^F(\sigma)$ has two seperatrix, which are denoted by $W^{F,+}(\sigma)$ and $W^{F,-}(\sigma)$. We discuss $W^{F,+}$ only since they are in a symmetric position. If $E$ is uniformly contracting, then $W^{ss}_{loc}(\sigma)$ separates a small neighborhood of $\sigma$ into two parts: the upper part which contains $W^{F,+} (\sigma)$, and the lower part which contains $W^{F,-}(\sigma)$. If $W^{F,+}(\sigma)$ is Lyapunov stable in the following sense: for any $\varepsilon>0$, there is $\delta>0$ such that $\varphi_t(W_\delta^{F,+}(\sigma))\subset W_\varepsilon^{F,+}(\sigma)$ for any $t>0$, then the upper part will be foliated by locally strong stable foliations.

\begin{Lemma}\label{Lem:uniquelyintegrable}

If $W^{F,+}(\sigma)$ is contained in the unstable manifold of $\sigma$, then it is uniquely defined.

\end{Lemma}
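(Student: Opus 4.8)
The plan is to show that the hypothesis "$W^{F,+}(\sigma)$ is contained in the unstable manifold $W^u(\sigma)$" forces $F$ to be a genuinely expanding direction, and then to invoke the uniqueness of the strong unstable plaque. The key observation is this: on $W^{F,+}(\sigma)$ the orbits converge to $\sigma$ in backward time (that is what it means to lie in $W^u(\sigma)$), so the tangent flow $\Phi_t$ restricted to $F$ must contract in backward time, hence expand in forward time; combined with the domination $E\oplus F$ already assumed on $T_\sigma M$, the splitting $E\oplus F$ is in fact a \emph{partially hyperbolic} splitting with $F$ uniformly expanding (and $E$ possibly central, neither expanding nor known to contract). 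Thus $W^{F,+}(\sigma)$ is a piece of an \emph{unstable} manifold, not merely of a center manifold.

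First I would make the expansion of $F$ precise. Since $W^F(\sigma)$ is tangent to $F$ at $\sigma$ and locally invariant, and since every point of $W^{F,+}(\sigma)\setminus\{\sigma\}$ lies in $W^u(\sigma)$, the negative orbit of such a point stays in a small neighborhood of $\sigma$ and tends to $\sigma$; linearizing along $\sigma$, the norm $\|\Phi_{-t}|_F\|\to 0$ as $t\to+\infty$, and by the spectral decomposition of the linear map $DX(\sigma)$ (or equivalently of $\Phi_1|_{T_\sigma M}$) this forces the eigenvalue(s) governing $F$ to have positive real part; so there are $C>0,\lambda>0$ with $\|\Phi_{-t}|_F\|\le C e^{-\lambda t}$ for all $t\ge 0$. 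Next, with $F$ uniformly expanding, the standard unstable manifold theorem (applied to the hyperbolic-in-the-$F$-direction setting, exactly as in \cite{HPS77}) produces the strong unstable manifold $W^{uu}_F(\sigma)$ tangent to $F$, and — this is the point — the strong unstable manifold of a fixed point is \emph{unique}: any locally invariant manifold tangent to $F$ whose points have negative orbits converging to $\sigma$ must coincide with it near $\sigma$. Since $W^{F,+}(\sigma)$ is precisely such a manifold, it equals the corresponding separatrix of $W^{uu}_F(\sigma)$, and is therefore uniquely determined.

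The main obstacle I expect is the first step: extracting genuine uniform expansion of $F$ from the \emph{a priori} weak information that $W^{F,+}(\sigma)$ sits inside $W^u(\sigma)$. The subtlety is that $W^u(\sigma)$ is only asserted to be \emph{some} unstable manifold and the plaque $W^F(\sigma)$ is only \emph{locally} invariant and possibly non-unique, so one must be careful that "negative orbit converges to $\sigma$" really does hold for all nearby points of $W^{F,+}(\sigma)$ and translates into a linear-algebraic statement about $\Phi_t|_F$ rather than merely a nonlinear one; here the one-dimensionality of $F$ is essential, since it reduces the backward-contraction estimate to the single real number $\log\|\Phi_{-t}|_F\|$ and lets the domination $E\oplus F$ do the rest. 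Once expansion of $F$ is in hand, the uniqueness is the classical graph-transform argument and carries no real difficulty. I would also record the symmetric statement for $W^{F,-}(\sigma)$, which follows verbatim, so that $W^F(\sigma)$ itself is unique whenever it is contained in $W^u(\sigma)$.
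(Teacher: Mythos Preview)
Your argument has a genuine gap at the first step. You claim that backward convergence to $\sigma$ of points on $W^{F,+}(\sigma)$ forces $\|\Phi_{-t}|_F\|\to 0$, hence that $F$ is uniformly expanding. This inference is false: convergence along the nonlinear center curve need not be governed by the linear part at all. Take the planar field $\dot x=-x$, $\dot y=y^{2}$ at $\sigma=0$, with $E=\langle\partial_x\rangle$ and $F=\langle\partial_y\rangle$. The splitting $E\oplus F$ is dominated, and the eigenvalue of $DX(\sigma)$ on $F$ is zero, so $\Phi_t|_F\equiv{\rm id}$ for all $t$. Nevertheless the positive $y$-axis is a center plaque $W^{F,+}(\sigma)$ on which every point has backward orbit $y(-t)=y_0/(1+y_0 t)\to 0$, so $W^{F,+}(\sigma)\subset W^u(\sigma)$. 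Thus the hypothesis of the lemma holds while $F$ is not expanding, and your reduction to the classical strong-unstable-manifold theorem collapses precisely in the case that matters. (When $F$ \emph{is} expanding the paper disposes of the lemma in one line, exactly as you do; the non-hyperbolic case is the whole content.) The one-dimensionality of $F$ that you invoke does not rescue the step: it only says $\Phi_t|_F$ is a single scalar, not that this scalar is determined by the nonlinear dynamics on the plaque.

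The paper treats the remaining case --- eigenvalue zero on $F$ --- by exploiting the other half of the domination: now $E=E^s$ must be uniformly contracting, so one builds a backward-invariant, backward-expanding cone field ${\cal C}^E_\alpha$ in a neighborhood of $\sigma$. Two distinct center plaques tangent to $F$ at $\sigma$ differ, near $\sigma$, by a vector lying in this $E$-cone; picking nearby points $x\in W^{F,+}$ and $\tilde x\in\widetilde W^{F,+}$ and iterating backward, the separation $d(\phi_{-nT}(x),\phi_{-nT}(\tilde x))$ at least doubles at each step while both backward orbits remain in the neighborhood, which is impossible. You correctly flagged the first step as the main obstacle, but it is not a difficulty to be finessed; it is an obstruction, and the detour through backward expansion in the $E$-direction is what actually proves the lemma.
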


\begin{proof}
Under the assumptions, $F$ cannot be a contracting bundle. If $F$ is an expanding bundle, then the conclusion follows from the classical theorem about the existence of unstable manifolds.

So we can assume that $F$ is non-hyperbolic. In this case, since we have the dominated splitting $T_\sigma M=E\oplus F$, we have that $E=E^s$ is contracting.
We can extend the bundle $E$ and $F$ in a small neighborhood $U$ of $\sigma$, which are still denoted by $E$ and $F$. For $\alpha>0$, for any point $x\in U$, we define the cone field ${\cal C}^E_\alpha(x)\subset T_x M$ :
$${\cal C}^E_\alpha(x)=\{v\in T_x M:~v=v_E+v_F,~v_E\in E(x),~v_F\in F(x),~\textrm{and}~|v_F|\le\alpha |v_E|\}.$$

By reducing $U$ if necessary, one has
\begin{Claim}
There are $\alpha>0$,  $T>0$ and $\varepsilon>0$ such that
\begin{enumerate}

\item For any $x\in U$, if $\phi_{[-T,0]}(x)\in U$, then $\Phi_{-T}(x)({\cal C}^E_\alpha(x))\subset {\cal C}^E_\alpha(\phi_{-T}(x))$.

\item For any $x,y\in U$ such that $d(x,y)<\varepsilon$, if $\phi_{[-T,0]}(x)\in U$ and $\phi_{[-T,0]}(y)\in U$, and if $\exp_x^{-1}(y)\in {\cal C}^E_\alpha(x)$, then $\exp_{\phi_{-T}(x)}^{-1}(y)\in {\cal C}^E_\alpha(\phi_{-T}(x))$, and moreover $d(\phi_{-T}(x),\phi_{-T}(y))>2d(x,y)$.

\end{enumerate}

\end{Claim}

\begin{proof}[Proof of the Claim]
First at the singularity $\sigma$, there is $T>0$ and $\alpha>0$ such that

\begin{itemize}

\item $\Phi_{-T}({\cal C}^E_\alpha(\sigma))\subset {\cal C}^E_{\alpha/2}(\phi_{-T}(\sigma))$,

\item for any unit vector $v\in {\cal C}^E_\alpha(\sigma)$, $|\Phi_{-T}(v)|>2$.

\end{itemize}

So by the continuity of $E$, by reducing $U$ if necessary, we have it is also true for any $x\in U$. Thus Item 1 of the claim is true.
Next, since the linearization of $\phi_{-T}$ is uniformly continuous, we have that the existence of $\varepsilon$ such that Item 2 of the claim is true.

\end{proof}

Now we continue the proof of Lemma \ref{Lem:uniquelyintegrable}. We will prove it by absurd.
Suppose  that there exists another central manifold $\widetilde{W}^{F,+}$.
This central manifold $\widetilde{W}^{F,+}$ could not be Lyapunov stable, otherwise the upper part of a small neighborhood of $\sigma$ will be foliated by
a strong stable foliation. As a corollary, the original central manifold $W^{F,+}$ is also Lyapunov stable. This is a contradiction.
Now we choose two points $x\in W^{F, +}$ and $\tilde{x} \in \widetilde{W}^{F,+}$ such that $d(x,\sigma)\ll \varepsilon $,
$d(\tilde{x}, \sigma)\ll \varepsilon$ and a curve $\gamma$ connecting
$x$ and $\tilde{x}$ verifying $\exp_x^{-1}(\gamma) \subset {\cal C}^E_\alpha (x)$. The negative iterations $\gamma$ of $\phi_{-nT}$ ($n=1,2,\cdots$) will have the property:
$d(\phi_{-nT} (x), \phi_{-nT} (\tilde{x}))>2^nd(x, \tilde{x})\geq \varepsilon $ for $n$ large enough. This contradicts to the triangle inequality.

\end{proof}

\subsection{The cone-like region }
For $\alpha>0$, one considers the cone
$${\cal C}^F_\alpha=\{v\in T_\sigma M:~v=v_E+v_F,~v_E\in E,~v_F\in F,~\textrm{and}~|v_E|\le\alpha |v_F|\}.$$
and  considers $C^F_\alpha=\exp_\sigma ({\cal C}^F_\alpha\cap T_\sigma M(\beta))$.

We extend $E$ and $F$ continuously to a  small  neighborhood $U$ of $\sigma$. Let $\alpha>0$ and $\beta>0$ be given.
Now we define a cone-like region ${\cal D}_\alpha^ F (\beta)$,
\begin{eqnarray*}
{\cal D}_\alpha^ F (\beta)&=&\{x\in M: d(x,\sigma)<\beta, X(x)=X_F (x)+X_E(x), X_F (x)\in F(x),\\
               &~& X_E(x)\in E(x), |X_E (x)|<\alpha |X_F (x)| \}.
\end{eqnarray*}

\begin{Lemma}\label{Lem:smallscale}
Assume that $DX(\sigma)|_E$ is non-singular. For any $\delta>0$, there are $\alpha>0$ and $\beta>0$ such that for any $y\in {\cal D}_\alpha^ F (\beta)$, one has $\exp_{y}({\cal N}_{y}(\delta|X(y)|))\cap W^F(\sigma)\neq\emptyset$.
\end{Lemma}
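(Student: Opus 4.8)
The plan is to work in the chart $\exp_\sigma$, to reduce the assertion to one quantitative estimate — that points of $\mathcal D^F_\alpha(\beta)$ lie within distance $\ll\delta|X(\cdot)|$ of $W^F(\sigma)$ — and then to extract the intersection by a one‑variable implicit‑function argument. Concretely, I would identify a neighbourhood of $\sigma$ with one of $0$ in $T_\sigma M=E\oplus F$ via $\exp_\sigma$, writing points as $v=a+b$ with $a\in E$, $b\in F$, and letting $\pi_E,\pi_F$ be the projections determined by this splitting. Since the splitting is $\Phi_t$‑invariant and $\Phi_t|_{T_\sigma M}=\mathrm e^{tDX(\sigma)}$, both $E$ and $F$ are $DX(\sigma)$‑invariant; hence $A:=DX(\sigma)|_E$ is non‑singular, say $|Av|\ge c|v|$, and $DX(\sigma)|_F$ is scalar ($\dim F=1$; fix a unit $f\in F$). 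By the plaque family theorem, $W^F(\sigma)$ is in this chart the graph $\{\,g(b)+b:|b|<r_0\,\}$ with $g\colon F\to E$ of class $C^1$, $g(0)=0$, $Dg(0)=0$. I would absorb all small errors — the distortion of $\Xi:=\exp_\sigma^{-1}\circ\exp_y$ near $\sigma$, the modulus of continuity of $DX$ at $\sigma$, and the tilt of the continuous extensions $E(\cdot),F(\cdot)$ away from $E,F$ — into one quantity $\varepsilon(\beta)\to0$, and set $\rho(\beta):=\sup_{|b|<2\beta}\big(|g(b)|/|b|+|Dg(b)|\big)\to0$.

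The crucial step is the estimate: \emph{for $y=a_0+b_0\in\mathcal D^F_\alpha(\beta)$, with $q:=g(b_0)+b_0\in W^F(\sigma)$ the point of $W^F$ having the same $F$‑coordinate as $y$,}
\[
|a_0-g(b_0)|\ \le\ \tfrac{4}{c}\,\big(\alpha+\varepsilon(\beta)+\rho(\beta)\big)\,|X(y)| .
\]
To prove it I would note first that $X(y)\ne0$ (otherwise the cone condition $|X_E(y)|<\alpha|X_F(y)|$ reads $0<0$), and that $|X_F(y)|\ge|X(y)|/2$ and $|X_E(y)|<\alpha|X(y)|$. Since $y-q=a_0-g(b_0)\in E$, the mean value theorem gives $X(y)-X(q)=\big(\int_0^1 DX(q+t(a_0-g(b_0)))\,dt\big)(a_0-g(b_0))$; applying $\pi_E$ and using $\pi_E DX(\sigma)|_E=A$ together with the $C^1$‑continuity of $DX$ at $\sigma$, the vector $\pi_E(X(y)-X(q))$ has norm at least $(c-\varepsilon(\beta))|a_0-g(b_0)|$. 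On the other hand $|\pi_E X(y)|\le(\alpha+\varepsilon(\beta))|X(y)|$ (cone condition together with $E(y)\approx E$), and $|\pi_E X(q)|\le\rho(\beta)|X(q)|\le\rho(\beta)\big(|X(y)|+C_1|a_0-g(b_0)|\big)$, because local invariance forces $X(q)$ to be tangent to the graph of $g$ at $q$, whose slope is $\le\rho(\beta)$. Combining these three inequalities and choosing $\beta$ small enough that $\rho(\beta)C_1<c/4$ yields the estimate. Its point is that $d(y,W^F(\sigma))$ is controlled by a \emph{small multiple of $|X(y)|$}, not merely of $d(y,\sigma)$ — and this is exactly where the non‑singularity of $DX(\sigma)|_E$ is used.

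It then remains to upgrade this closeness to an honest intersection. In the chart, $\exp_y(\mathcal N_y(\delta|X(y)|))$ is $\{\Xi(v):v\in\mathcal N_y,\ |v|\le\delta|X(y)|\}$, where $\Xi(v)=v_0+Lv+R(v)$ with $\|L-\mathrm{id}\|\le\varepsilon(\beta)$ and $R$ a $C^1$‑small remainder. For $s$ near $s_0$ (where $b_0=s_0f$) the equation $\Xi(v)=g(sf)+sf$ has a unique small solution $v=v(s)$, depending $C^1$ on $s$ (contraction, using that $R$ is $C^1$‑small); the vanishing of $\Psi(s):=\langle v(s),X(y)\rangle$ means precisely that $\exp_y(v(s))\in W^F(\sigma)$. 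Up to an $O(\varepsilon(\beta)|X(y)|)$ error one has $\Psi'(s)=\langle Dg(sf)f+f,(L^{-1})^{*}X(y)\rangle$; since $X(y)$ makes a small angle with the line $F=\langle f\rangle$ (cone condition) and $|Dg|\le\rho(\beta)$, this gives $|\Psi'|\ge|X(y)|/16$, so $\Psi$ is strictly monotone, while $|\Psi(s_0)|\le|v(s_0)|\,|X(y)|\le4|a_0-g(b_0)|\,|X(y)|$. Hence there is $s^{*}$ with $\Psi(s^{*})=0$, $|s^{*}-s_0|\le64|a_0-g(b_0)|$ (so $|s^{*}|<r_0$ once $\beta$ is small), and $|v(s^{*})|\le C_2|a_0-g(b_0)|$ for an absolute $C_2$, with $\exp_y(v(s^{*}))\in W^F(\sigma)$. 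By the crucial estimate, $|v(s^{*})|\le C_2\tfrac4c(\alpha+\varepsilon(\beta)+\rho(\beta))|X(y)|\le\delta|X(y)|$ once $\beta$ is chosen small (to make $\varepsilon(\beta)+\rho(\beta)$ small) and then $\alpha$ is chosen small; for such $\alpha,\beta$ the point $\exp_y(v(s^{*}))$ lies in $\exp_y(\mathcal N_y(\delta|X(y)|))\cap W^F(\sigma)$, which is the claim. I expect the estimate of the second paragraph to be the main obstacle; the transversality computation is routine, the only delicate point being that $\mathcal N_y$ is the \emph{Riemannian} orthocomplement of $X(y)$, hence a priori not adapted to the possibly non‑orthogonal splitting $E\oplus F$ — this is harmless here because $X(y)$ is nearly tangent to the one‑dimensional $F$, so $\mathcal N_y$ stays uniformly transverse to $W^F(\sigma)$.
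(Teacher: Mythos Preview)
Your proposal is correct and follows essentially the same approach as the paper: both use the non-singularity of $A:=DX(\sigma)|_E$ to bound the $E$-distance from $y$ to $W^F(\sigma)$ by a small multiple of $|X(y)|$ (the paper via $|y_E|\le c|X_E(y)|\le c\alpha|X(y)|$, you via the mean-value estimate $|a_0-g(b_0)|\lesssim(\alpha+\varepsilon+\rho)|X(y)|/c$), and then argue that the normal disk of radius $\delta|X(y)|$, being almost parallel to $E$, must cross $W^F(\sigma)$. Your write-up is considerably more careful than the paper's---you treat $W^F$ as the graph of $g$ rather than tacitly replacing it by the $F$-axis, you track the tilt $E(y)$ vs.\ $E$, and you replace the paper's one-line ``basic geometry'' with an explicit implicit-function/monotonicity argument---but the underlying scheme is the same.
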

\begin{proof} Without loss of generality, we can identify a small neighborhood $U$ to be an open set in ${\mathbb R}^d$, and $\sigma$ to be the origin
point  $0$. $E$ and $F$ are two subspaces perpendicular to each other. Thus $W^F(\sigma)$ is a $C^1$ curve tangent to $F$ at the origin.
Denote by $x=(x_E, x_F)$ for each point $x$ in the small neighborhood. We assume that the flow generated by the vector filed in a small neighborhood of $\sigma$ is the solution of the following differential equation:

\[\begin{array}{ll}\
&\displaystyle\frac{\mathrm{d}x_E}{\mathrm{d}t}=A_E x_E + f_E(x), \\[0.2cm] &\displaystyle\frac{\mathrm{d}x_F}{\mathrm{d}t}=A_F x_F +  f_F(x),
\end{array}\]
where $f_E$ and $f_F$ are higher order terms of $|x|$, and the matrix

$$\begin{pmatrix}
A_E~~~ 0\\
0 ~~~ A_F
\end{pmatrix}$$
can be regarded as $DX(\sigma)$. Under our assumptions, we always know that $A_E$ is non-singular since $E$ is dominated by $F$.

Thus we can find a constant $c>0$ which depends only on $A$ such that for small enough $\beta>0$, if $d(x, \sigma)<\beta$ then $|x_E|\leq c |X_E(x)|$.

Through an argument of basic geometry, if $|X_E (x)|\leq   \alpha |X_F (x)| $, then
the lengths of $E$ component of $N_x(tX(x))$ is greater than
$c_0t |X(x)|$ provided $x$ close to $\sigma$ enough for some uniform constant $c_0=c_0(\alpha)>0$.

Now let $\delta>0$ be given. Choose $\alpha>0$ satisfying $\alpha c/c_0\leq \delta$. We
also choose $\beta>0$ satisfying the conditions above. Let $y\in {\cal D}_\alpha^ F (\beta)$.
 To prove $\exp_{y}({\cal N}_{y}(\delta|X(y)|))\cap W^F(\sigma)\neq\emptyset$, it is enough to show that
 the length of $E$ component of $\exp_{y}({\cal N}_{y}(\delta|X(y)|))$ is greater that $|y_E|$.
 From the first part, we have
 \[|y_E| \leq c|X_E(y)|\leq c \alpha |X_F(y)|\leq c \alpha |X(y)|.\]
 From the second part,  the size of $E$ component of $N_y(\delta X(y))$ is greater
 that $c_0 \delta |X(y)|$, which is greater than $c \alpha |X(y)|$ and hence greater than $|E_y|$.
 \end{proof}

\begin{Lemma}\label{Lem:insidecone}
 Let $\eta>0$ and $T>0$. Assume that $\sigma$ is a singularity and there is a sequence of $(1,\eta,T)$-$\psi^*$-contracted points $\{x_n\}$ such that $\lim_{n\to\infty}x_n=\sigma$. For any $\alpha>0$, there is $L=L(\alpha)>0$ such that for any $L'>L$ and any $\beta>0$,
 there exists an integer $N= N(\alpha, \beta, L')$ such that for any $n\geq N$,
  $\phi_{[L,L']}(x_n)\in {\cal D}_\alpha^ F(\beta)$.
\end{Lemma}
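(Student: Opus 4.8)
The plan is to show that under the rescaled flow the points $x_n$ spend their time, after a bounded waiting period $L$, inside the cone-like region $\mathcal{D}^F_\alpha(\beta)$ around $\sigma$, by combining the domination coming from Proposition~\ref{Pro:splittingsingularity} with the contraction estimate of Lemma~\ref{Lem:uniformestimate}. First I would invoke Proposition~\ref{Pro:splittingsingularity}: since $\{x_n\}$ is a sequence of $(1,\eta,T)$-$\psi^*$-contracted points converging to $\sigma$, the tangent space $T_\sigma M$ splits as $E\oplus F$ with $\dim F=1$, the splitting is dominated with respect to $\Phi_t$, and every accumulation point of $\{X(x_n)/|X(x_n)|\}$ lies off $E$ — indeed, because these vectors are dominated w.r.t. $\chi_t$, their accumulation points must lie in the strong unstable (here, the $F$) direction. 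This is the geometric fact that will eventually force $X(\phi_s(x_n))$ to line up with $F(\phi_s(x_n))$.

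Next I would make this quantitative along the orbit. Fix $\alpha>0$. Using the domination of the splitting $E\oplus F$ (extended continuously to a neighborhood $U$ of $\sigma$), there is $L=L(\alpha)>0$ so that the cone field $\mathcal{C}^F_{\alpha}$ is mapped by $\Phi_t$ for $t\ge L$ strictly inside itself on $U$ — that is, $\Phi_L(x)(\mathcal{C}^F_{\alpha_0}(x))\subset \mathcal{C}^F_{\alpha}(\phi_L(x))$ for a fixed initial aperture $\alpha_0$ and any $x$ whose backward $L$-orbit stays in $U$. The key point is that $X(x_n)/|X(x_n)|$ converges (along a subsequence) to some unit vector $u\in S^1_\sigma M$ which, by Proposition~\ref{Pro:splittingsingularity}, is not in $E$, hence has a definite $F$-component; so $X(x_n)/|X(x_n)|$ itself eventually lies in $\mathcal{C}^F_{\alpha_0}(x_n)$. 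Since $X$ is $\Phi_t$-invariant as a vector, $X(\phi_t(x_n))=\Phi_t(X(x_n))$, so for every $t\in[L,L']$ the vector $X(\phi_t(x_n))$ lies in the forward cone $\mathcal{C}^F_\alpha(\phi_t(x_n))$, which is exactly the aperture condition $|X_E(\phi_t(x_n))|<\alpha|X_F(\phi_t(x_n))|$ defining $\mathcal{D}^F_\alpha(\beta)$ up to the localization.

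It remains to produce the localization: given $\beta>0$ and $L'>L$, I must show $d(\phi_t(x_n),\sigma)<\beta$ for all $t\in[L,L']$ once $n\ge N(\alpha,\beta,L')$. This follows from continuous dependence on initial conditions: since $\phi_t(\sigma)=\sigma$ for all $t$ and $x_n\to\sigma$, the compact time interval $[0,L']$ gives $\sup_{t\in[0,L']}d(\phi_t(x_n),\sigma)\to 0$, so it is eventually $<\beta$; this is where $N$ depends on $L'$ (the upper endpoint) but not on the lower one $L$. One subtlety to handle carefully: the cone-inclusion argument requires that the backward $L$-orbit of $\phi_t(x_n)$ remain in the neighborhood $U$ where $E,F$ are defined — but for $t\in[L,L']$ the points $\phi_{t-L}(x_n),\dots,\phi_t(x_n)$ all lie within $[0,L']$-time of $x_n$, hence in $U$ for $n$ large, so this is automatic from the same continuity estimate. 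I expect the main obstacle to be bookkeeping the order of quantifiers — establishing that $L$ depends only on $\alpha$ (through the domination constants near $\sigma$) and then that $N$ may be chosen afterwards once $\beta$ and $L'$ are given — rather than any deep new estimate; the genuinely substantive input, that accumulation points of $X(x_n)/|X(x_n)|$ avoid $E$, has already been supplied by Proposition~\ref{Pro:splittingsingularity}.
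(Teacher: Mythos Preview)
Your proposal is correct and follows essentially the same route as the paper: invoke Proposition~\ref{Pro:splittingsingularity} to get the dominated splitting $E\oplus F$ and the fact that the limit direction $v=\lim X(x_n)/|X(x_n)|$ avoids $E$; use domination to find $L=L(\alpha)$ after which $\Phi_t(v)$ sits in the $\alpha$-cone about $F$; then use continuity of $\Phi_t$ and $x_n\to\sigma$ on the compact interval $[0,L']$ to transfer this to $X(\phi_t(x_n))$ and to localize within distance $\beta$. Two small remarks: the paper argues entirely at $\sigma$ and invokes continuity at the end, whereas you extend $E,F$ to a neighborhood and run a cone-field argument there --- both work and amount to the same thing; and your opening reference to Lemma~\ref{Lem:uniformestimate} is extraneous, since no stable-manifold size estimate is needed here.
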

\begin{proof}
 By taking a subsequence if
 necessary, we have that $\lim_{n\to\infty}X(x_n)/|X(x_n)|=v\in T_\sigma M$.
 First by Proposition~\ref{Pro:splittingsingularity}, $\sigma$ admits
 a dominated splitting  $E\oplus F$ with respect to  the tangent flow and $\dim F=1$.
 Given  $\alpha>0$, by the domination and $v\not\in E$,  there exists $L =L(\alpha)$
 such that for any $t>L$,
  $\Phi_t(v)\in{\cal D}^F_\alpha$.

Let $\beta>0$
and $L^{'}>L$ be given. By the fact that $x_n\to \sigma$,
 ${X(x_n) \over |X(x_n)|} \to v$ and the continuity of $\Phi_t$, there is some positive
 integer $N=N(\alpha, \beta, L')$ such that for any  $n\geq N$,
  we have $d(x_n, \sigma)< \beta$,  $\Phi_{[L,L']}^1(X(x_n)/|X(x_n)|)\subset {\cal C}_\alpha^F$.
  The last relation implies that $
  |X_E (x)|<   \alpha |X_F (x)| $ for $x\in \phi_{[L, L'] }(x_n)$ and implies that
  $\phi_{[L,L']}(x_n)\subset {\cal D}_\alpha^ F(\beta)$.

\end{proof}

\section{The final proof of Theorem \ref{Thm:main}}

%DDDDDDDDDDDDDDDDDDDDDDDd
%
%To see whether it is useful
%
%
%In the following of this paper, we will be in the context of the assumption of  Theorem \ref{Thm:main}. That is, suppose that
%$M$  is  a compact smooth manifold, $X$ is a $C^1$ vector field on $M$,  $\alpha>0$ and  $T>0$ are two given numbers.
%Our goal is to prove that  $X$ on $M$  have finitely many $(\alpha,T)$-uniform sinks.
%
% We will prove it by absurd. That is, suppose otherwise $X$ has infinitely many $(\alpha, T)$-uniform sinks, say ${\gamma_n}$.
%
% We emphasis that these are our general assumptions.
%
%First observation is rather trivial, since it is  totally as same as the proof of Liao\cite{Lia89}:
%  there exists a sequence of $(\eta, T)$-$\psi^*$-contracted points \{$x_n$\} such that
% \[x_n\rightarrow \rm{some \ singularity} \ \sigma \  \rm{as} \ (n\to \infty).\] For defiteness,  $\eta=\alpha/2$.
%
%
%
%
%DDDDDDDDDDDDDDDDDDDDDDDDDDDDDDDDDDDDDDDDDDDDDDD

We will prove Theorem~\ref{Thm:main} in this section. Assume that we are under the assumptions of Theorem~\ref{Thm:main}. We will prove it by contradiction. We assume that the conclusion of Theorem~\ref{Thm:main} is not true. That is,  there are infinitely many distinct $(\alpha,T)$-sinks $\{\gamma_n\}$ of $X$. If the period of $\gamma_n$ is bounded, then the limit point of $\{\gamma_n\}$ in the Hausdorff topology should be a non-hyperbolic periodic orbit $\gamma$. Then each periodic orbit close to $\gamma$ cannot be a $(\alpha,T)$-sink. This gives a contradiction.

Hence, we can assume that the period of $\gamma_n$ tends to infinity. By Pliss Lemma \cite{Pli72}, there is $x_n\in\gamma_n$ which is $(1,\eta,T)$-$\psi^*$-contracted for some $\eta>0$. Then by Lemma~\ref{Lem:uniformestimate}, there is $\delta>0$ such that $N_{x_n}(\delta|X(x_n)|)$ is contained in the basin of $\gamma_n$. As a corollary, the ball $B(x_n,\delta|X(x_n)|)$ is also contained in the basin of $\gamma_n$ by reducing $\delta$ if necessary.

Without loss of generality, we can assume that $\{x_n\}$ converges. If the limit point is a regular point, then the basin of $\gamma_n$ will contains a uniform ball for each $n$. This will contradict to the infiniteness. Thus, we can assume that $\lim_{n\to\infty}x_n=\sigma$, where $\sigma$ is a singularity.

By the assumptions, $\sigma$ will be hyperbolic or sectionally dissipative. By Proposition~\ref{Pro:splittingsingularity}, there is a dominated splitting $T_\sigma M=E\oplus F$ w.r.t. the tangent flow, where $\dim F=1$. In any case we will have $DX(\sigma)|_E$ is non-singular.

By the theory of invariant manifolds, we know the existence of $W^F_\varepsilon(\sigma)$.

\paragraph{We consider the case that $F$ is expanded.}
Let $\delta =\delta(1, {\eta/2}, T)$ given by Lemma \ref{Lem:uniformestimate}. By Lemma \ref{Lem:smallscale},
for this $\delta$, there exist $\alpha>0$ and $\beta>0$ such that for any $y\in {\cal D}_\alpha^ F (\beta)$, one has $\exp_{y}({\cal N}_{y}(\delta|X(y)|))\cap W^F(\sigma)\neq\emptyset$.
Lemma \ref{Lem:insidecone} gives the number $L=L(\alpha, \beta)>0$.
$\phi_{[L,L']}(x_n)\in {\cal D}_\alpha^ F(\beta)$ for any given $L'>L$ provided that $n$ is large. So there exists $C>0$ which depends on $L$, such that
$y_n=\phi_L(x_n)\in $ is $(C, \eta, T)$-$\psi^*$-contracted. By Lemma \ref{Lem:infinitePliss}, there exists $L'>L$,
such that some points $z_n = \phi_{\tilde{L}}(x_n)$ $(L< \tilde{L}< L')$ is $(1, {\eta\over 2}, T)$-$\psi^*$-contracted.
By Lemma \ref{Lem:uniformestimate} and our choice of $\delta$ above, whenever $n$ is large enough,
$\exp_{z_n}{\cal N}_{z_n}(\delta|X(z_n)|)$ is in the stable domain of the sectional Poincar\'e map $P_t$ for any $t\ge 0$. By Lemma \ref{Lem:smallscale},
we have $\exp_{z_n}({\cal N}_{z_n}(\delta|X(z_n)|))\cap W^F(\sigma)\neq\emptyset$. In other words, in $n$ is large enough, the basin of $\gamma_n$ intersects the $W^{F}(\sigma)$. But $W^{F}(\sigma)$ contains only two orbits. The two orbits can only go to at most two sinks by forward iterations. This gives a contradiction.

\paragraph{Now we will the consider the case that $F$ is not expanding.}

By the domination, $E=E^s$ is uniformly contracted. Thus the stable manifold $W^s(\sigma)$ separates a small neighborhood of $\sigma$ into two parts: the upper part which contains $W^{F,+}(\sigma)$ and the lower part which contains $W^{F,-}(\sigma)$. By taking a subsequence if necessary, we assume that $\{x_n\}$ accumulates $\sigma$ in the upper part.

As in \cite{PuS09}, there are two cases for $W^{F,+}(\sigma)$:

\begin{enumerate}

\item $W^{F,+}(\sigma)$ is Lyapunov stable in the following sense: for any $\varepsilon>0$, there is $\delta>0$ such that $\varphi_t(W_\delta^{F,+}(\sigma))\subset W_\varepsilon^{F,+}(\sigma)$ for any $t>0$.

\item $W^{F,+}(\sigma)$ is contained in the unstable manifold of $\sigma$.

\end{enumerate}

In Case 1, the upper half part of the small neighborhood will be foliated by a strong stable foliation. This means that some point in $W^{F,+}$ is contained in the stable manifold of some $\gamma_n$. This contradicts to the local invariance of $W^{F,+}(\sigma)$.

For Case 2, by Lemma~\ref{Lem:uniquelyintegrable}, $W^{F,+}$ is uniquely defined. The argument will be similar to the case when $F$ is expanding. For completeness, we repeat it again.

Let $\delta =\delta(1, {\eta/2}, T)$ be the number given by Lemma \ref{Lem:uniformestimate}. By Lemma \ref{Lem:smallscale},
for this $\delta$, there exist $\alpha>0$ and $\beta>0$ such that for any $y\in {\cal D}_\alpha^ F (\beta)$, one has $\exp_{y}({\cal N}_{y}(\delta|X(y)|))\cap W^F(\sigma)\neq\emptyset$.
Lemma \ref{Lem:insidecone} gives the number $L=L(\alpha, \beta)>0$.
$\phi_{[L,L']}(x_n)\in {\cal D}_\alpha^ F(\beta)$ for any given $L'>L$ provided that $n$ is large. So there exists $C>0$ which depends on $L$, such that
$y_n=\phi_L(x_n)\in $ is $(C, \eta, T)$-$\psi^*$-contracted. By Lemma \ref{Lem:infinitePliss}, there exists $L'>L$,
such that some points $z_n = \phi_{\tilde{L}}(x_n)$ $(L< \tilde{L}< L')$ is $(1, {\eta\over 2}, T)$-$\psi^*$-contracted.
By Lemma \ref{Lem:uniformestimate} and our choice of $\delta$ above, whenever $n$ is large enough,
$\exp_{z_n}{\cal N}_{z_n}(\delta|X(z_n)|)$ is in the stable domain of the sectional Poincar\'e map $P_t$ for any $t\ge 0$. By Lemma \ref{Lem:smallscale},
we have $\exp_{z_n}({\cal N}_{z_n}(\delta|X(z_n)|))\cap W^{F,+}(\sigma)\neq\emptyset$. In other words, if $n$ is large enough, the basin of $\gamma_n$ intersects $W^{F,+}(\sigma)$. $W^{F,+}(\sigma)$ is just one orbit and uniquely defined. Thus it can only go to at most one sink by forward iterations. This gives a contradiction.
The proof of Theorem \ref{Thm:main} is complete.

\begin{Acknowledgements}
We would like to  thank Shaobo Gan for the knowledge from him about singular flows, especially on Liao's work.

\end{Acknowledgements}

\vskip 5pt

\noindent Dawei Yang

\noindent School of Mathematics, Jilin University, Changchun, 130012, P.R. China

\noindent yangdw1981@gmail.com

\vskip 5pt

\noindent Yong Zhang

\noindent Department of Mathematics, Suzhou University, Suzhou, 215006, P.R. China

\noindent yongzhang@suda.edu.cn

\end{document}